\author{Felix Goldberg}
\address{Hamilton Institute, National University of Ireland Maynooth, Ireland}
\email{felix.goldberg@gmail.com}
\title{On monotonicity-preserving perturbations of $M$-matrices}
\date{March 15, 2013}
\newtheorem{thm}{Theorem}[section]
\newtheorem{cor}[thm]{Corollary}
\newtheorem{rmrk}[thm]{Remark}
\newtheorem{lem}[thm]{Lemma}
\newtheorem{defin}[thm]{Definition}
\begin{document}

\begin{abstract}
We obtain an explicit analytical sufficient condition on $E$ that ensures the monotonicity of the matrix $M+E$, where $M$ is an $M$-matrix.
\end{abstract}

\subjclass[2010]{15A09, 65F35, 65L20, 65M12, 15A45, 15B51}

\keywords{monotone matrix, $M$-matrix, perturbation, Buffoni number, Sherman-Morrison formula}

\maketitle
 
\section{Introduction} 
A very important concept in applied linear algebra is that of \emph{monotonicity}, going back to Ostrowski \cite{Ost37} and Collatz \cite{Col52}. We say that a matrix $A$ is \emph{monotone} if $A^{-1} \geq 0$ (entrywise). 

From topological considerations it is clear that if $A^{-1}>0$ then all matrices within some open ball around $A$ are also monotone. We are interested in this paper in quantifying this fact. The effect of a small perturbation on the inverse of a matrix may be estimated by 
standard techniques (cf. \cite[Section 5.8]{HornJohnson}) - but the estimates obtained this way are far too weak to be useful.

Recall that a matrix $A$ is an $M$-Matrix if $A=sI-B$, with $B \geq 0$ and $s \geq \rho(B)$. It is well-known (cf. \cite{Avi}) that $M$-matrices are monotone. We would like to study the monotonicity of a perturbation of an $M$-matrix $A$ by a nonnegative matrix $E$. 

Before we usher in our new result (Theorem \ref{thm:main}), we wish to briefly survey the main recent contributions to this problem. For earlier efforts see the introductions to \cite{HuaHayHua12} and to \cite{Bou07}.

\subsection{Buffoni's algorithm}\label{sec:buffoni}
In the 1990 paper \cite{Buf90} (which unfortunately seems to have gone unnoticed until cited for the first time in \cite{HuaHayHua12} in 2012) Buffoni proposed an iterative algorithm that for fixed motonone $A$ and nonnegative $E$ finds the maximum $v>0$ so that $A+vE$ is monotone. We shall call this number $v^{*}$ (it can also be infinite).

Let us denote for $v>0$: $Z(v)=(A+vE)^{-1}$.
\begin{thm}\cite[Theorems 1 and 2]{Buf90}\label{thm:buffoni}
Let $A$ be monotone and $E \geq 0$. Define the following sequence:
$$
v_{k+1}=v_{k}+\min_{i,j,w_{kij}>0}{\frac{z_{kij}}{w_{kij}}}, \ k=0,1,2,\ldots,n, \ , v_{0}=0,
$$
where $Z_{k}=Z(v_{k})=(z_{kij})$ and $W_{k}=-Z^{'}(v_{k})=Z_{k}EZ_{k}=(w_{kij})$.
Then the sequence $\{v_{k}\}$ either converges quadratically to $v^{*}$ if $v^{*}<\infty$ or diverges to $\infty$ otherwise.
\end{thm}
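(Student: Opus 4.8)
The plan is to recognize the recursion as a coordinatewise Newton iteration and to exploit the entrywise convexity of $Z(v)$ on the admissible interval. First I would record the two derivative identities obtained by differentiating $Z(v)(A+vE)=I$, namely
\[
Z'(v) = -Z(v)EZ(v), \qquad Z''(v)=2\,Z(v)EZ(v)EZ(v).
\]
On the interval $[0,v^{*}]$ one has $Z(v)\geq 0$ and $E\geq 0$, so every factor on the right-hand sides is entrywise nonnegative; consequently each entry $z_{ij}(v)$ is a \emph{nonincreasing, convex} scalar function of $v$ there, with $z'_{ij}(v)=-w_{ij}(v)\leq 0$. This is the structural fact that drives the whole argument. (One also checks that $A+vE$ stays invertible on $[0,v^{*}]$, since a decreasing convex entry cannot blow up to $+\infty$, so the first event as $v$ grows is an entry crossing zero.)

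Next I would give the Newton interpretation. For a fixed entry the tangent line to $z_{ij}$ at $v_k$ meets zero at $N_{ij}(v_k)=v_k+z_{kij}/w_{kij}$, which is exactly the scalar Newton iterate for $z_{ij}(v)=0$; by convexity the tangent lies below the graph, so $N_{ij}(v_k)$ is a lower bound for the root $r_{ij}:=\inf\{v: z_{ij}(v)=0\}$ (with $r_{ij}=\infty$ if that entry never vanishes). Since $v^{*}=\min_{ij}r_{ij}$, the recursion $v_{k+1}=\min_{ij:\,w_{kij}>0}N_{ij}(v_k)$ satisfies $v_{k+1}\leq v^{*}$; and because each step $\min_{ij}z_{kij}/w_{kij}$ is nonnegative, $\{v_k\}$ is nondecreasing. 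Thus $\{v_k\}$ increases and is bounded above by $v^{*}$.

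For the limit I would pass to $k\to\infty$. If the sequence stays bounded it converges to some $L\leq v^{*}$ and the steps tend to $0$, so continuity forces $\min_{ij:\,w_{ij}(L)>0}z_{ij}(L)=0$; hence some entry with $w_{ij}(L)>0$ has $z_{ij}(L)=0$, and being convex and strictly decreasing there it turns negative immediately after $L$, whence $v^{*}=L$. The same dichotomy settles divergence: if $v^{*}=\infty$ then $L$ cannot be finite, for it would produce a vanishing entry with strictly negative slope and hence violate $Z(v)\geq 0$ for all $v$; therefore $v_k\to\infty$ in that case.

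Finally, the quadratic rate. Let $S^{*}=\{(i,j):r_{ij}=v^{*}\}$ be the active entries. For $(i,j)\in S^{*}$ the standard scalar Newton estimate yields $v^{*}-N_{ij}(v_k)\leq C\,(v^{*}-v_k)^{2}$, while every inactive entry entering the minimum has $z_{ij}(v^{*})>0$ with $w_{ij}(v^{*})>0$, so $N_{ij}(v^{*})>v^{*}$; by continuity and finiteness the inactive iterates stay bounded away above $v^{*}$ once $v_k$ is close. Hence for large $k$ the minimum $v_{k+1}=\min_{ij}N_{ij}(v_k)$ is attained on $S^{*}$ and obeys $v^{*}-v_{k+1}\leq C\,(v^{*}-v_k)^{2}$. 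I expect the main obstacle to be exactly the degenerate case of an active entry with $w_{ij}(v^{*})=0$ (a tangency, i.e. a double root of $z_{ij}$): there the plain tangent-line estimate breaks down because $z'_{ij}(v^{*})=0$, and the quadratic bound must instead be extracted from the higher-order Taylor data of that entry, which is the delicate point of the proof.
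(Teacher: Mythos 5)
This theorem is quoted verbatim from Buffoni's paper \cite{Buf90}; the present paper gives no proof of it, so there is no internal argument to compare yours against, and I can only assess your outline on its own terms. It is the natural route and almost certainly the one Buffoni takes: differentiate $Z(v)(A+vE)=I$ to get $Z'=-ZEZ$ and $Z''=2ZEZEZ$, conclude that on the monotonicity interval every entry of $Z(v)$ is a nonnegative, nonincreasing, convex scalar function of $v$, read the update as an entrywise Newton step whose tangent root underestimates the true root, and deduce monotone convergence from below plus the usual Newton rate. That skeleton is correct.

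Two points need more care than you give them. First, in identifying a finite limit $L$ with $v^{*}$ you appeal to the entry being ``strictly decreasing'' at $L$, which need not hold; the clean argument is that each $z_{ij}(v)$ is a rational function of $v$, so if it is not identically zero and is nonnegative and nonincreasing on the monotonicity interval, it can vanish only at the right endpoint of that interval (vanishing on a subinterval would force it to vanish identically), while identically zero entries are excluded from the minimum by the condition $w_{kij}>0$. Second, the degenerate case you flag at the end is a genuine obstruction rather than a technicality to be absorbed into ``higher-order Taylor data'': for a root of multiplicity $m\geq 2$ the scalar Newton error contracts only by the factor $1-1/m$, i.e.\ linearly, so either one must show that an active entry of $(A+vE)^{-1}$ cannot have a multiple root at $v^{*}$ under the hypotheses $A$ monotone, $E\geq 0$, or the quadratic rate must be asserted only in the nondegenerate case. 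As a blind reconstruction of a cited result the proposal is sound, but it is a sketch with these two gaps rather than a complete proof.
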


Let us consider an example:
\begin{equation}\label{eq:simple}
A=\left(\begin{array}{ccc}
1.6&0&-0.6\\
-0.4&1.4&0\\
-0.2&-0.4&1.6\\
\end{array}
\right).
\end{equation}

Applying Theorem \ref{thm:buffoni} we get $v^{*}=0.15$ for $E=E_{12}$ and $v^{*}=0.6$ for $E=E_{13}$. These numbers can be verified by elementary algebra. Take now $E=J$ (the all-ones matrix) and then we get $v^{*}=0.0923$, a somewhat less obvious fact.

Buffoni's algorithm has excellent convergence properties and leaves little to be desired when the numerical value of $v^{*}$ is required, as in the small example we've considered. But when analytical bounds are required it is less useful - a point already made by Buffoni himself in \cite{Buf90}.

\subsection{Bouchon's theorem}
A different kind of result was obtained by Bouchon in \cite{Bou07} in 2007. We state it here in terms slightly different from the original. The (directed) graph of a matrix $A$ is defined in the obvious way ($i \rightarrow j$ if and only if $a_{ij} \neq 0$). The distance in this graph will be denoted by $d(\cdot,\cdot)$.

Let $A \in \mathbf{R}^{n}$ and let $\sigma_{i}=\frac{1}{|a_{ii}|}\sum_{j \neq i}{|a_{ij}|}$, and let $J(A)$ be the set of indices $i$ so that $\sigma_{i}<1$. The matrix $A$ is said to be \emph{irreducibly diagonally dominant} if it is irreducible, $J(A) \neq 0$, and $\sigma_{i} \leq 1$ for all $i$. If $\sigma_{i}(A)<1$ for all $i$ we shall say that $A$ is \emph{strictly diagonally dominant}.
%If $\sigma_{i} \leq 1$ for all $i$ and every $i_{1} \notin J(A)$ is connected by a path in the graph of $A$ to some $i_{2} \in J(A)$, then $A$ is said to be \emph{weakly chained diagonally dominant} (or \emph{wcdd}) (\cite{ShiWilYeMar96}).

We also need to define a number of other quantities associated with $A$ and $E$:
\begin{itemize}
\item
$m(A)=\min_{i=1,\ldots,n}{|a_{ii}|}$
\item
$\eta(A)=\max_{\substack{i,j =1,\ldots,n \\ a_{ij} \neq 0}}{\frac{|a_{ii}|}{|a_{ij}|}}$
\item
$M=M(A,E)=\max_{\substack{i,j =1,\ldots,n \\ e_{ij} \neq 0}}{d(i,j)}$
\item
$C=C(A,E)=\frac{1}{(\eta(A)^{M})Me}.$
\end{itemize}

\begin{thm}\cite[Theorem 2.5]{Bou07}\label{thm:bouchon}
Let $A$ be an irreducibly diagonally dominant $M$-matrix and let $E \in \mathbf{R}^{n}$ so that $E\mathbf{1} \geq 0$. If it holds that $||E||_{\infty} < C \cdot m(A)$, then the matrix $A+E$ is monotone and all its entries are positive.
\end{thm}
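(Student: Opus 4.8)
The plan is to prove both assertions at once by comparing, entrywise, the Neumann expansion of $(A+E)^{-1}$ against the inverse $A^{-1}$. Write $A = D - N$ with $D>0$ diagonal and $N\ge 0$ holding the negated off-diagonal entries, and set $P = D^{-1}N \ge 0$. Since $A$ is irreducibly diagonally dominant, the row sums of $P$ are the numbers $\sigma_i \le 1$, strict on the nonempty set $J(A)$, so Perron--Frobenius theory for irreducible nonnegative matrices gives $\rho(P) < 1$; hence $A^{-1} = \left(\sum_{k\ge 0}P^{k}\right)D^{-1} \ge 0$, and because the graph of $A$ is strongly connected, in fact $A^{-1} > 0$. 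From the factorization $A+E = (I + EA^{-1})A$ I would write $(A+E)^{-1} = A^{-1}\sum_{k\ge 0}(-EA^{-1})^{k}$; once the partial sums are controlled entrywise, this identity \emph{simultaneously} shows that $A+E$ is invertible and exhibits its inverse, so the whole problem reduces to dominating the tail $\sum_{k\ge 1}$ by the leading term.

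First I would record bounds on the leading term. Since $(A^{-1})_{ij} = |a_{jj}|^{-1}\sum_{k\ge 0}(P^{k})_{ij}$, the lowest-order nonzero contribution comes from $k = d(i,j)$, carried by a shortest path $i=i_0\to\cdots\to i_{d(i,j)}=j$. Bounding each edge factor by $|a_{i_l i_{l+1}}|/|a_{i_l i_l}| \ge 1/\eta(A)$ gives the clean lower bound $(A^{-1})_{ij} \ge |a_{jj}|^{-1}\eta(A)^{-d(i,j)}$, which is the source of the $\eta(A)^{-M}$ scale in $C$. The complementary \emph{upper} bound on the entries of $A^{-1}$ (needed to bound the correction) is the more delicate half and would also be extracted here.

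The heart of the proof is the tail estimate. Bounding signs crudely gives $\bigl|[A^{-1}(-EA^{-1})^{k}]_{pq}\bigr| \le [A^{-1}(|E|A^{-1})^{k}]_{pq}$, and the first-order term expands as $\sum_{i,j}(A^{-1})_{pi}\,|e_{ij}|\,(A^{-1})_{jq}$. Here the definition $M = \max_{e_{ij}\ne 0}d(i,j)$ enters decisively: the sum only sees pairs $(i,j)$ with $d(i,j)\le M$, so each contributing product $(A^{-1})_{pi}(A^{-1})_{jq}$ can be measured against $(A^{-1})_{pq}$ through $d(p,q)\le d(p,i)+M+d(j,q)$ together with the two-sided entry bounds of the previous step, losing at most a factor $\eta(A)^{M}$; the room left by the hypothesis $\|E\|_\infty < m(A)/(\eta(A)^{M}Me)$ is exactly what absorbs this loss. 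Using that $A\mathbf{1}\ge 0$ by diagonal dominance together with $E\mathbf{1}\ge 0$, so that $(A+E)\mathbf{1}\ge 0$ retains the row-sum sign information, one reduces the full tail to a geometric-type series in which as many as $M$ length-$\le M$ factors accumulate; the Euler constant $e$ then appears through an estimate of the form $(1+1/M)^{M}\le e$ that converts these $M$ accumulated factors into the single bound $\sum_{k\ge 1}[A^{-1}(|E|A^{-1})^{k}]_{pq} < (A^{-1})_{pq}$.

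That strict inequality, holding for \emph{every} $(p,q)$, is the main obstacle and is also what delivers the theorem: it forces the partial sums of the Neumann series to converge, so $A+E$ is invertible, and it forces each entry of $(A+E)^{-1}$ to exceed its positive leading term minus a strictly smaller correction, hence to be positive. The hard part is not convergence but the \emph{uniform} entrywise domination, and in particular obtaining a serviceable upper bound on the entries of $A^{-1}$; pinning the threshold at exactly $C\,m(A)$ requires the shortest-path lower bound, the entry upper bound, and the path-length bookkeeping to be matched precisely, and this is where the hypotheses that $A$ is irreducibly diagonally dominant and that $E\mathbf{1}\ge 0$ are both consumed.
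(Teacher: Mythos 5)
This theorem is only quoted in the paper from \cite{Bou07}; there is no proof here to compare against, so your attempt must stand on its own. As written it does not: it is a strategy outline in which both decisive estimates are announced rather than carried out. The upper bound on the entries of $A^{-1}$ is deferred (``would also be extracted here''), the domination of the first-order term is asserted to follow because the hypothesis ``is exactly what absorbs this loss,'' and the constant $C=1/(\eta(A)^{M}Me)$ is never derived --- the factor $e$ is attached to a formula $(1+1/M)^{M}\le e$ that is not connected to any actual computation. The parts you do carry out (the factorization $A=D(I-P)$, $\rho(P)<1$, the shortest-path lower bound $(A^{-1})_{ij}\ge a_{jj}^{-1}\eta(A)^{-d(i,j)}$) are correct but are the easy half.

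The substantive problem is that the tool you plan to use next --- absolute ``two-sided entry bounds'' on $A^{-1}$ --- does not exist under the stated hypotheses. An irreducibly diagonally dominant $M$-matrix need not be strictly dominant in every row: for the $n\times n$ second-difference matrix $\mathrm{tridiag}(-1,2,-1)$ one has $m(A)=\eta(A)=2$ independently of $n$, while $\|A^{-1}\|_{\infty}$ grows like $n^{2}$. Hence no upper bound on $(A^{-1})_{pi}$ in terms of $m(A)$ and $\eta(A)$ can exist, and the plan of dominating $\sum_{i,j}(A^{-1})_{pi}|e_{ij}|(A^{-1})_{jq}$ by inserting absolute upper bounds for the two factors and the absolute lower bound for $(A^{-1})_{pq}$ cannot close. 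What is needed is a \emph{relative}, Harnack-type comparison of entries of $A^{-1}$ within a single column, $(A^{-1})_{ij}\ge\eta(A)^{-d(i,k)}(A^{-1})_{kj}$, obtained by iterating the identity $a_{ii}(A^{-1})_{ij}=\delta_{ij}+\sum_{l\ne i}|a_{il}|(A^{-1})_{lj}$ along shortest paths; your shortest-path argument is the right instinct applied to the wrong (absolute) quantity. Two further gaps: the very first reduction $\bigl|[A^{-1}(-EA^{-1})^{k}]_{pq}\bigr|\le[A^{-1}(|E|A^{-1})^{k}]_{pq}$ discards the hypothesis $E\mathbf{1}\ge 0$, after which $E$ enters only through $\|E\|_{\infty}$, so your closing claim that this hypothesis is ``consumed'' is not supported by anything in the outline (it appears only in the aside about $(A+E)\mathbf{1}\ge 0$, which plays no role in the entrywise domination); and convergence of $\sum_{k}(-EA^{-1})^{k}$ requires $\rho(EA^{-1})<1$, which again presupposes a bound on $A^{-1}$ that the hypotheses do not supply, so invertibility cannot be assumed before the tail estimate is actually in hand.
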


Theorem \ref{thm:bouchon} is the first analytical result to give a sufficient bound for monotonicity-preserving perturbations and it is very useful in certain circumstances. However, it has also some limitations. The apparent one - the dependency of $M(A,E)$ on $E$ - is, in fact, not very severe. It is well-known that the diameter of almost every undirected graph is two (cf. \cite[p. 341]{GraphsDigraphs4}), and while for directed graphs the situation is more complicated, in many cases of interest we will actually have $M(A,E)=2$. 

Nevertheless, the bound of Theorem \ref{thm:bouchon} may prove to be very conservative even when $M(A,E)=2$. For the matrix $A$ given in \eqref{eq:simple} and a perturbation $E\geq 0$ all of whose entries are allowed to be non-zero we get: $m(A)=1.4, \eta(A)=4, M=2$ and therefore $C=\frac{1}{32e}$ and $||E||_{\infty} < \frac{1.4}{32e}=0.0161$ as the Bouchon bound in this case. On the other hand, we had seen before (by applying Buffoni's algorithm) that we may actually take \emph{each} entry of $E$ to be as high as $0.0923$ and still preserve monotonicity. 

%In other cases, though, Bouchon's theorem does give informati

Our small example highlights another limitation of Theorem \ref{thm:bouchon} - the bound on $E$ is in terms of a norm. While norms have traditionally been the staple of numerical analysis, recently there has been growing recognition of the usefulness of \emph{componentwise} analysis for various applications (cf. \cite{Hig94}).

\subsection{Tridiagonal matrices}
\begin{thm}\cite[Theorem 3.3]{HuaHayHua12}\label{thm:tridi1}
Let 
%\begin{equation}\label{eq:m}
$$
A=
\left[
\begin{array}{ccccc}
a_{1} & -c_{1} &  &  & \\
-b_{1} & a_{2} & -c_{2} & &  \\
 & \ddots & \ddots & \ddots &\\
 & & -b_{n-2} & a_{n-1} & -c_{n-1}\\
 &  &  & -b_{n-1} & a_{n}
\end{array}
\right]
%\end{equation}
$$
be a tridigonal $M$-matrix and let $E=hE_{l,k}$, with $h \geq 0$ and $|l-k| \geq 2$. Then $A+E$ is monotone if and only if $h$ satisfies:
$$
h \leq \frac{\prod_{s=l}^{k-1}{c_{s}}}{det M_{[l+1:k-1]}}, \text{if} \ l \leq k-2
$$
or 
$$
h \leq \frac{\prod_{s=k}^{l-1}{b_{s}}}{det M_{[k+1:l-1]}}, \text{if} \ l \leq k+2.
$$
\end{thm}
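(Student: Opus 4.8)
The plan is to treat $E=hE_{l,k}$ as the rank-one update $h\,e_{l}e_{k}^{\top}$ and apply the Sherman--Morrison formula to $A^{-1}=(\alpha_{ij})$. Writing $\alpha_{kl}$ for the $(k,l)$ entry of $A^{-1}$, one finds that the $(i,j)$ entry of $(A+hE_{l,k})^{-1}$ equals $\alpha_{ij}-h\,\alpha_{il}\alpha_{kj}/(1+h\,\alpha_{kl})$. Since $A$ is a nonsingular $M$-matrix we have $A^{-1}\ge 0$, so $\alpha_{kl}\ge 0$ and the denominator $1+h\,\alpha_{kl}$ is positive for every $h\ge 0$; in particular $A+hE_{l,k}$ is invertible. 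Hence monotonicity of $A+hE_{l,k}$ is equivalent to the scalar inequalities $\alpha_{ij}\ge h(\alpha_{il}\alpha_{kj}-\alpha_{ij}\alpha_{kl})$ for all $i,j$. Each such inequality is automatic when $\alpha_{il}\alpha_{kj}\le\alpha_{ij}\alpha_{kl}$, and otherwise reads $h\le \alpha_{ij}/(\alpha_{il}\alpha_{kj}-\alpha_{ij}\alpha_{kl})$; so $A+hE_{l,k}$ is monotone if and only if $h$ does not exceed the minimum of these ratios, and the ``if and only if'' will follow once that minimum is identified.

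Next I would use the semiseparable structure of the inverse of an irreducible tridiagonal matrix (Usmani's formula); we may assume $A$ irreducible, i.e.\ all $b_{s},c_{s}>0$, the reducible case reducing to this by the block structure of reducible $M$-matrices. Writing $\theta_{m}=\det A_{[1:m]}$ and $\phi_{m}=\det A_{[m:n]}$, one has for $i\le j$ the factorization $\alpha_{ij}=x_{i}y_{j}$ with $x_{i}=\theta_{i-1}/(\prod_{s<i}c_{s})$ and $y_{j}=\phi_{j+1}(\prod_{s<j}c_{s})/\theta_{n}$, together with a companion factorization $\alpha_{ij}=q_{i}p_{j}$ for $i\ge j$. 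The aim of this step is to show that, among all entries, the $(l,k)$ entry of the perturbed inverse is the first to vanish as $h$ grows; equivalently that $\alpha_{il}\alpha_{kj}/\alpha_{ij}\le \alpha_{ll}\alpha_{kk}/\alpha_{lk}$ for every $i,j$. I expect this minimization to be the main obstacle. The plan is a case analysis on the positions of $i,j$ relative to $l<k$: in the region $i\le l,\ j\ge k$ the quotient is the constant $x_{k}y_{l}=\alpha_{ll}\alpha_{kk}/\alpha_{lk}$; on the ``middle'' region $l<j\le i<k$ the quotient equals $\alpha_{kl}$, so the perturbation term cancels and imposes no constraint; and in each remaining region the factorization collapses the quotient so that the required inequality reduces, via the Desnanot--Jacobi complementary-minor identity, to the nonnegativity of a principal minor $\det A_{[\,\cdot\,:\,\cdot\,]}$ --- which holds because every principal submatrix of an $M$-matrix is again an $M$-matrix and so has nonnegative determinant.

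Granting that the binding ratio is $\alpha_{lk}/(\alpha_{ll}\alpha_{kk}-\alpha_{lk}\alpha_{kl})$, the final step evaluates it in closed form. Applying Jacobi's identity for the complementary minors of $A^{-1}$ to the index pair $\{l,k\}$ gives $\alpha_{ll}\alpha_{kk}-\alpha_{lk}\alpha_{kl}=\det A_{\{l,k\}^{c}}/\det A$, and this is positive (a ratio of positive principal minors), so the constraint is genuinely active and the bound finite. Because $l$ and $k$ are non-adjacent in the tridiagonal graph ($k-l\ge 2$), deleting rows and columns $l$ and $k$ disconnects the remaining index set, so $A_{\{l,k\}^{c}}$ is block diagonal and $\det A_{\{l,k\}^{c}}=\theta_{l-1}\,\det A_{[l+1:k-1]}\,\phi_{k+1}$. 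Combining this with Usmani's value $\alpha_{lk}=(\prod_{s=l}^{k-1}c_{s})\,\theta_{l-1}\phi_{k+1}/\theta_{n}$, the common factor $\theta_{l-1}\phi_{k+1}/\theta_{n}$ cancels and the bound becomes exactly $\prod_{s=l}^{k-1}c_{s}/\det A_{[l+1:k-1]}$. Finally, the case $l\ge k+2$ follows by applying the result just proved to $A^{\top}$ --- again a tridiagonal $M$-matrix, but with the roles of the $b_{s}$ and $c_{s}$ interchanged --- together with the fact that a matrix is monotone if and only if its transpose is, which yields the companion bound with $\prod_{s=k}^{l-1}b_{s}$ over $\det A_{[k+1:l-1]}$.
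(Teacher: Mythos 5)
The paper states this theorem only as a quoted result from \cite{HuaHayHua12} in its survey section and supplies no proof, so there is no internal argument to set yours against; I can only judge the proposal on its own terms, noting that your use of Sherman--Morrison is very much in the spirit of the paper, which uses the same Lemma \ref{lem:sm} to prove its own Theorem \ref{thm:main}. Your reduction is correct: Sherman--Morrison gives $(A+hE_{l,k})^{-1}_{ij}=\alpha_{ij}-h\alpha_{il}\alpha_{kj}/(1+h\alpha_{kl})$, invertibility for all $h\ge 0$ follows from $\alpha_{kl}\ge 0$, and (in the irreducible case, where $\alpha_{ij}>0$) monotonicity becomes $1\ge h\bigl(\alpha_{il}\alpha_{kj}/\alpha_{ij}-\alpha_{kl}\bigr)$ for all $i,j$, so everything hinges on showing $\max_{i,j}\alpha_{il}\alpha_{kj}/\alpha_{ij}=\alpha_{ll}\alpha_{kk}/\alpha_{lk}$. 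You rightly flag this maximization as the main obstacle and only sketch it, but the sketch does close: writing $\alpha_{ij}=x_iy_j$ for $i\le j$ and $\alpha_{ij}=p_iq_j$ for $i\ge j$ with $x_iy_i=p_iq_i$, every region reduces to the monotonicity of $q_i/y_i$ in $i$, which is precisely $\alpha_{ij}\alpha_{ji}\le\alpha_{ii}\alpha_{jj}$, i.e.\ (by Jacobi) $\det A_{\{i,j\}^{c}}\ge 0$ --- nonnegativity of principal minors of an $M$-matrix, as you predicted. The endgame (Jacobi on the pair $\{l,k\}$, block-diagonality of $A_{\{l,k\}^{c}}$ because $k-l\ge 2$ disconnects the tridiagonal graph, cancellation of $\theta_{l-1}\phi_{k+1}/\theta_{n}$ against Usmani's value of $\alpha_{lk}$) is exactly right, as is handling $l\ge k+2$ by transposition (the paper's ``$l\le k+2$'' is a typo). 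Two soft spots would need attention in a full write-up: the case analysis must actually be carried out --- there are several regions, including $l<i<j<k$, where the $q/y$-monotonicity is needed twice --- and the reducible case is not a one-liner, since there $\alpha_{lk}$ may vanish and the ``binding ratio'' cannot be formed by division; one must argue directly from $\alpha_{lk}(1+h\alpha_{kl})\ge h\,\alpha_{ll}\alpha_{kk}$ that only $h=0$ survives when some $c_{s}=0$ with $l\le s\le k-1$, consistent with the vanishing of the product in the stated bound.
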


Theorem \ref{thm:tridi1} gives a very satisfactory closed-form answer to the perturbation problem for the case that $E$ has exactly one nonzero entry. Note that the bound on $h$ is in terms of the entries of $M$.

For the case of a more general fixed nonnegative perturbation $E$ of the tridiagonal $M$-matrix $A$ \cite[Theorem 4.4]{HuaHayHua12} gives an iterative algorithm that determines $v^{*}$ in Buffoni's sense. The authors of \cite{HuaHayHua12} report some experiments that indicate their algorithm may converge even faster than Buffoni's general algorithm.

\subsection{Gavrilov's theorem}
Finally, we wish to mention another elegant but little-known result due to Gavrilov \cite{Gav01}. While not dealing directly with the issue of perturbation, it nevertheless could be potentially very useful in our context. 
%Note that a positive definite matrix is 
\begin{thm}\cite{Gav01}
Let $A$ be a $n \times n$ positive definite matrix. If all the principal submatrices of $A$ of order $m$ for some $2 \leq m<n$ are monotone, then $A$ is monotone.
\end{thm}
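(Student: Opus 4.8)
The plan is to induct on the order $r$ of the principal submatrices, proving that \emph{every} principal submatrix of $A$ of order $r$ is monotone for each $r$ with $m \le r \le n$; the case $r=m$ is the hypothesis and the case $r=n$ is the desired conclusion. Since every principal submatrix of a positive definite matrix is again positive definite, it suffices to establish the single inductive step: if $B$ is a symmetric positive definite matrix of order $r+1$ all of whose principal submatrices of order $r$ are monotone, then $B$ is monotone. Writing $C=B^{-1}$, observe that $C$ is symmetric positive definite, so its diagonal entries are positive and it remains only to show $C_{ij}\ge 0$ for $i\ne j$. The link to the smaller submatrices is the classical identity expressing the inverse of a principal submatrix as a Schur complement inside $C$: for any index $p$ and any $i,j\ne p$,
\[
\bigl(B[\widehat p\,]^{-1}\bigr)_{ij}=C_{ij}-\frac{C_{ip}C_{jp}}{C_{pp}},
\]
where $B[\widehat p\,]$ denotes the principal submatrix of $B$ obtained by deleting row and column $p$. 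Since $B[\widehat p\,]$ is monotone by the inductive hypothesis and $C_{pp}>0$, this yields the family of inequalities
\begin{equation*}
C_{ij}\,C_{pp}\ \ge\ C_{ip}\,C_{jp}\qquad(i,j,p\ \text{distinct}).\tag{$\ast$}
\end{equation*}

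Next I would reduce the inductive step to a statement about a single triple of indices. To prove $C_{ij}\ge 0$ for a fixed pair $i\ne j$, choose any third index $p$ (possible since $r+1\ge 3$) and work only inside the $3\times 3$ principal submatrix $C[\{i,j,p\}]$, which is again positive definite. The three instances of $(\ast)$ supported on $\{i,j,p\}$ all arise from order-$r$ principal submatrices of $B$, hence are available. Conjugating the block by a positive diagonal matrix normalizes its diagonal to $1$ while preserving both positive definiteness and the inequalities $(\ast)$, so the claim becomes one about three numbers $x=C_{ij}$, $y=C_{ip}$, $z=C_{jp}$, each of modulus less than $1$ (because the $2\times 2$ principal minors of a positive definite matrix are positive), subject to $x\ge yz$ and $z\ge xy$. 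Assuming $x<0$ forces $yz\le x<0$, so $y$ and $z$ have opposite signs; say $y>0>z$. Then $x\ge yz$ gives $y\lvert z\rvert\ge\lvert x\rvert$ while $z\ge xy$ gives $\lvert z\rvert\le\lvert x\rvert y$, and combining these yields $\lvert x\rvert\le\lvert x\rvert y^{2}$, i.e. $y^{2}\ge 1$, contradicting $\lvert y\rvert<1$. Hence $x=C_{ij}\ge 0$, which completes the step and the induction.

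The main obstacle is exactly the case in which $C_{ip}$ and $C_{jp}$ carry opposite signs: then the right-hand side of $(\ast)$ is negative, and the combinatorial inequalities coming from monotonicity of the smaller submatrices are, on their own, too weak to force $C_{ij}\ge 0$. The decisive input — and the only place where positive definiteness enters beyond guaranteeing positive diagonal inverse entries — is the strict positivity of the $2\times 2$ principal minors of $C$, which after normalization supplies the bound $\lvert y\rvert<1$ that closes the argument. I would therefore take care to verify that this strict bound survives the diagonal normalization, and to record the symmetric subcase $z>0>y$, which is handled identically upon interchanging the roles of $i$ and $j$.
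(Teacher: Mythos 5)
The paper states Gavrilov's theorem without proof, simply citing \cite{Gav01}, so there is no in-paper argument to compare against; judged on its own, your proof is correct and self-contained. The induction scaffolding is sound (every order-$(r+1)$ principal submatrix of $A$ has all of its order-$r$ principal submatrices among those of $A$, so the single-step lemma suffices), the Schur-complement identity $\bigl(B[\widehat p\,]^{-1}\bigr)_{ij}=C_{ij}-C_{ip}C_{jp}/C_{pp}$ is the right bridge to the smaller submatrices, and the inequalities $(\ast)$ are indeed invariant under the diagonal normalization, so the reduction to the three normalized numbers $x,y,z$ with $|x|,|y|,|z|<1$, $x\ge yz$, $y\ge xz$, $z\ge xy$ is legitimate; the sign-chasing that derives $y^{2}\ge 1$ from $x<0$ checks out, as does the symmetric subcase. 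One point worth making explicit: you use symmetry of $B$ (hence of $C$) throughout --- in replacing $C_{pj}$ by $C_{jp}$ in the Schur identity and in reducing to a symmetric $3\times 3$ block --- which is consistent with the paper's reading of ``positive definite'' (see its remark that the case $m=2$ recovers the monotonicity of \emph{symmetric} $M$-matrices), but should be stated as a standing hypothesis; if one allowed non-symmetric matrices with positive definite symmetric part, the $2\times 2$ minor bound $|y|<1$ that closes your argument would no longer be available in this form. I cannot tell whether Gavrilov's original route is the same, but your argument is elementary and complete as written.
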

Note that the case $m=2$ of Gavrilov's theorem recovers for symmetric matrices the standard fact that $M$-matrices are monotone.

\section{Our new result}

First we recall the famous Sherman-Morrison formula. For its history and context we refer to \cite{HenSea81}.
\begin{lem}\label{lem:sm}
Let $A \in \mathbb{R}^{n \times n}$ be invertible and let $u,v \in \mathbb{R}^{n}$, and $b \in \mathbb{R}$. Then:
$$
(A+buv^{T})^{-1}=A^{-1}-\frac{b}{1+bv^{T}A^{-1}u}A^{-1}uv^{T}A^{-1}.
$$
\end{lem}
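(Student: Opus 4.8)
The plan is to verify the formula directly by multiplication. Since the right-hand side is presented as an explicit candidate for the inverse, it suffices to check that its product with $A+buv^{T}$ equals the identity matrix $I$ (the argument on the other side is entirely symmetric). To streamline the bookkeeping I would first abbreviate the scalar quantity $\beta = v^{T}A^{-1}u$, so that the denominator appearing in the statement is simply $1+b\beta$. The single most important observation is that $\beta$ is a $1\times 1$ object, i.e.\ an ordinary real number; this is what permits it to be factored freely out of any matrix product and is the engine behind the entire computation.

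Next I would expand the product $(A+buv^{T})\bigl(A^{-1}-\tfrac{b}{1+b\beta}A^{-1}uv^{T}A^{-1}\bigr)$ by distributivity into four terms. Two of them are immediate: $AA^{-1}=I$, and the cross term contributes $buv^{T}A^{-1}$. In the remaining two terms the inner factor $v^{T}A^{-1}u$ appears and collapses to the scalar $\beta$, after which both terms are proportional to the common matrix $uv^{T}A^{-1}$. Collecting all contributions of this form, their combined scalar coefficient is $b-\tfrac{b(1+b\beta)}{1+b\beta}=0$, so everything except $I$ cancels and the product is exactly $I$.

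The main obstacle here is not conceptual difficulty but disciplined use of associativity: one must repeatedly regroup triple products so that the scalar $v^{T}A^{-1}u$ is isolated rather than left buried as a matrix expression, and then track the three separate terms that are proportional to $uv^{T}A^{-1}$ without sign errors. The only hypothesis that genuinely requires attention is the tacit assumption that the denominator $1+bv^{T}A^{-1}u$ is nonzero, for otherwise the stated expression is undefined; this condition is in fact precisely the criterion under which $A+buv^{T}$ is invertible, so it is exactly the right hypothesis for the formula to be meaningful.
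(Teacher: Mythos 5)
Your verification is correct: the paper itself gives no proof of this lemma, merely citing it as the classical Sherman--Morrison formula, and your direct check that the proposed inverse multiplies $A+buv^{T}$ to the identity is the standard and complete argument (for square matrices a one-sided inverse is automatically two-sided, so checking one side does suffice). Your closing remark is also a genuine catch: the lemma as stated in the paper omits the hypothesis $1+bv^{T}A^{-1}u\neq 0$, which is needed for the formula to make sense and is precisely the condition for $A+buv^{T}$ to be invertible.
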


We shall also use a monotonicity criterion originally found by Kuttler \cite{Kut71} and stated as condition $N_{42}$ in \cite[p. 137]{Avi}. The notation $x>0$ for a vector $x$ means that all entries of $x$ are nonnegative, with at least one being strictly positive; $x>>0$ means that all entries of $x$ are strictly positive. The inequality $M \geq A$ for matrices is to be understood entrywise. The all-ones vector will be denoted $j$.
\begin{thm}\label{thm:kuttler}
A matrix $A$ is monotone if and only if there exist a monotone matrix $M$ and a vector $w>0$ such that:
\begin{enumerate}[(i)]
\item
$M \geq A$,
\item
$Aw>>0$.
\end{enumerate}
\end{thm}

We now define some quantities associated with $A^{-1}$:
Let $A^{-1}=(b_{ij})$. Let $\Sigma=\sum_{i,j}{b_{ij}}$ and let $\mathbf{r} \in \mathbb{R}^{n}$ and $\mathbf{c} \in \mathbb{R}^{n}$ be the vectors of row and column sums of $A^{-1}$, respectively. 

\begin{defin}
Let the \emph{Buffoni number} of $A$ be:
$$
B_{A}=\min_{i,j}{\frac{b_{ij}}{\mathbf{r}_{i}\mathbf{c}_{j}}}.
$$
\end{defin}

We can now present out main result.
\begin{thm}\label{thm:main}
Let $A \in \mathbb{R}^{n}$ be a strictly diagonally dominant $M$-matrix and let $E \in \mathbb{R}^{n}$. If 
\begin{equation}\label{eq:main}
|E| \leq \frac{B_{A}}{1-B_{A}\Sigma},
\end{equation}
then $A+E$ is monotone. This bound is best possible.
\end{thm}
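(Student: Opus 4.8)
The plan is to reduce the general perturbation to the single rank-one perturbation $E = tJ$, where $t := \frac{B_A}{1-B_A\Sigma}$ and $J=jj^T$ is the all-ones matrix, and then to transfer monotonicity from $A+tJ$ to $A+E$ by means of Kuttler's criterion (Theorem \ref{thm:kuttler}). The point of singling out $tJ$ is that it is rank one, so Sherman--Morrison (Lemma \ref{lem:sm}) applies and lets us compute $(A+tJ)^{-1}$ in closed form; the same computation will also show that the constant in \eqref{eq:main} cannot be improved.

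First I would carry out the Sherman--Morrison step. Taking $u=v=j$ and $b=t$ in Lemma \ref{lem:sm}, and using $A^{-1}j=\mathbf{r}$, $j^TA^{-1}=\mathbf{c}^T$ and $j^TA^{-1}j=\Sigma$, one gets
$$
(A+tJ)^{-1}=A^{-1}-\frac{t}{1+t\Sigma}\,\mathbf{r}\mathbf{c}^T,
$$
whose $(k,l)$ entry is $b_{kl}-\frac{t}{1+t\Sigma}\mathbf{r}_k\mathbf{c}_l$. Since $A$ is a strictly diagonally dominant $M$-matrix its inverse is entrywise nonnegative, so $\mathbf{r},\mathbf{c}\geq 0$ and this entry is nonnegative precisely when $\frac{b_{kl}}{\mathbf{r}_k\mathbf{c}_l}\geq\frac{t}{1+t\Sigma}$. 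The worst case over $(k,l)$ is, by definition, $B_A$, and a direct substitution shows that $t=\frac{B_A}{1-B_A\Sigma}$ makes $\frac{t}{1+t\Sigma}$ equal to $B_A$ exactly. Hence $A+tJ$ is monotone, with at least one entry of its inverse vanishing; since $\frac{t'}{1+t'\Sigma}$ is strictly increasing in $t'$, any larger multiple of $J$ drives that entry negative, which already establishes the optimality clause. Here one needs $1-B_A\Sigma>0$, so I would record separately the inequality $B_A\Sigma\leq 1$, which follows from $\Sigma=\sum_{k,l}b_{kl}\geq B_A\sum_{k,l}\mathbf{r}_k\mathbf{c}_l=B_A\Sigma^2$, with strict inequality once $n\geq 2$ because $A^{-1}$ cannot be rank one.

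Next I would feed this into Kuttler's criterion to handle an admissible $E$. Take $M:=A+tJ$; it is monotone by the previous step, and $M\geq A+E$ entrywise because $E\leq|E|\leq tJ$, so hypothesis (i) of Theorem \ref{thm:kuttler} holds. For hypothesis (ii) the natural candidate is $w:=A^{-1}j=\mathbf{r}$, which is a positive vector (no row of the nonsingular, entrywise nonnegative matrix $A^{-1}$ can vanish) and for which $Aw=j$. Then $(A+E)w=j+E\mathbf{r}$, and as soon as $E\mathbf{r}\geq 0$ this is $\geq j\gg 0$, so (ii) holds and Kuttler yields the monotonicity of $A+E$.

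The step that carries the real weight is precisely this last sign check, and it is where I expect the main difficulty. The domination $A+E\leq A+tJ$ controls $E$ only from above, whereas Kuttler's second condition demands a lower bound on $(A+E)w$; with the choice $w=\mathbf{r}$ this reduces to $E\mathbf{r}\geq 0$, which holds for every nonnegative $E$ (the setting announced in the introduction) but may fail once $E$ has negative entries. Indeed, for $E=-tJ$ Sherman--Morrison gives $(A-tJ)^{-1}=A^{-1}+\frac{t}{1-t\Sigma}\mathbf{r}\mathbf{c}^T$, which is nonnegative only when $t\Sigma<1$, i.e.\ when $2B_A\Sigma<1$. Thus for genuinely sign-indefinite $E$ I would expect to need either the nonnegativity of $E$ or the extra hypothesis $1-t\Sigma>0$; identifying the exact condition under which $w=\mathbf{r}$ (or some sharper, $E$-dependent, $w>0$) makes (ii) go through is, to my mind, the crux of the argument.
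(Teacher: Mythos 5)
Your argument is essentially the paper's own proof: dominate $E$ by $tJ$ with $t=\frac{B_A}{1-B_A\Sigma}$, compute $(A+tJ)^{-1}$ by Sherman--Morrison to see that monotonicity of $A+tJ$ is exactly the defining inequality of $B_A$, and transfer to $A+E$ via Kuttler's criterion (the paper takes $w=j$, using strict diagonal dominance to get $Aj\gg 0$, where you take $w=\mathbf{r}=A^{-1}j$; your extra checks that $B_A\Sigma<1$ and that the bound is sharp are welcome additions the paper leaves implicit). The difficulty you flag for sign-indefinite $E$ is real but afflicts the paper's own verification of condition (ii) just as much; the intended setting, announced in the introduction, is $E\geq 0$, under which both choices of $w$ go through.
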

\begin{proof}
First observe that $Aj>>0$ since $A$ is diagonally dominant and that therefore condition (ii) of Theorem \ref{thm:kuttler} is satisfied for $w=j$. Now let $v=\frac{B_{A}}{1-B_{A}\Sigma}$ and note that $A+E \leq A+vJ$. If we can show that $A+vJ$ is monotone, then we will be done, by Theorem \ref{thm:kuttler}. To see that, we use Lemma \ref{lem:sm}:
\begin{eqnarray}
(A+vJ)^{-1}=A^{-1}-\frac{v}{1+j^{T}A^{-1}j}A^{-1}JA^{-1}=\\=A^{-1}-\frac{v}{1+v\Sigma}\mathbf{r}\mathbf{c}^{T}.
\end{eqnarray}

But $\frac{v}{1+v\Sigma}=B_{A}$ by our choice of $v$ and therefore we see that $A+vJ$ is monotone if and only if $A^{-1} \geq B_{A}\mathbf{r}\mathbf{c}^{T}$ - and that is exactly the definition of $B_{A}$.
\end{proof}

\begin{rmrk}\label{rem:det}
The quantity $\Sigma$ can in fact be easily computed from $A$ by using the following rank-one update formula:
$$
\Sigma=j^{T}A^{-1}j=\frac{|A+J|-|A|}{|A|}.
$$
\end{rmrk}

\section{Examples and discussion}
First let us apply Theorem \ref{thm:main} to the matrix $A$ of \eqref{eq:simple}. Observe that all row and column sums of $A$ equal $1$and so do those of $A^{-1}$. Therefore $\Sigma=3$ in this case and $\mathbf{r}=\mathbf{c}=\mathbf{1}$. Let us compute $A^{-1}$:
$$
A^{-1}=\left(\begin{array}{ccc}
0.6747&0.0723&0.253\\
0.1928&0.7349&0.0723\\
0.1325&0.1928&0.6747\\
\end{array}
\right).
$$
Then $B_{A}$ is equal to the smallest entry of $A^{-1}$, \emph{viz.} $0.0723$. Plugging this value into \eqref{eq:main} we obtain the bound $|E| \leq 0.0923$ - which coincides with the result of Buffoni's algorithm reported in Section \ref{sec:buffoni}.

This example admits a nice generalization. If all the row and column sums of the matrix $A$ are equal to $1$, we shall say that $M$ is \emph{quasi-doubly-stochastic}. In that case $\mathbf{r}=\mathbf{c}=\mathbf{1}$ and $\Sigma=n$ and $B_{A}$ is equal to the smallest entry of $A^{-1}$. Let us state this:
\begin{cor}\label{cor:quasi}
Let $A \in \mathbb{R}^{n \times n}$ be a quasi-doubly-stochastic $M$-matrix. Let $m$ be the smallest entry of $A^{-1}$. Let $E \in \mathbb{R}^{n \times n}$. If 
\begin{equation}\label{eq:q}
|E| \leq \frac{m}{1-mn},
\end{equation}
then $A+E$ is monotone.
\end{cor}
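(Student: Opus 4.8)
The plan is to obtain this as a direct specialization of Theorem~\ref{thm:main}, so that essentially all of the content is inherited from that result; the only genuine work is to check that its hypotheses are met and to evaluate the quantities $\Sigma$ and $B_A$ for a quasi-doubly-stochastic $M$-matrix. First I would verify that such an $A$ is in fact strictly diagonally dominant, since this is precisely what Theorem~\ref{thm:main} requires. Writing $A$ as an $M$-matrix, its off-diagonal entries satisfy $a_{ij} \leq 0$, so $|a_{ij}| = -a_{ij}$ for $i \neq j$. The row-sum condition $A\mathbf{1} = \mathbf{1}$ then forces $a_{ii} = 1 - \sum_{j \neq i} a_{ij} = 1 + \sum_{j \neq i} |a_{ij}|$, whence $a_{ii} \geq 1 > 0$ and $\sigma_i = \frac{1}{|a_{ii}|}\sum_{j \neq i}|a_{ij}| = \frac{\sum_{j\neq i}|a_{ij}|}{1 + \sum_{j \neq i}|a_{ij}|} < 1$ for every $i$. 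Thus $A$ is strictly diagonally dominant and Theorem~\ref{thm:main} applies.

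The second step is to compute $\mathbf{r}$, $\mathbf{c}$, $\Sigma$, and $B_A$. From $A\mathbf{1} = \mathbf{1}$ I would left-multiply by $A^{-1}$ to get $A^{-1}\mathbf{1} = \mathbf{1}$, so every row sum of $A^{-1}$ equals $1$, that is $\mathbf{r} = \mathbf{1}$; transposing the column-sum condition $\mathbf{1}^T A = \mathbf{1}^T$ gives $\mathbf{1}^T A^{-1} = \mathbf{1}^T$ in the same way, so $\mathbf{c} = \mathbf{1}$. Consequently $\Sigma = \mathbf{1}^T A^{-1} \mathbf{1} = n$, and in the definition of the Buffoni number every denominator $\mathbf{r}_i \mathbf{c}_j$ equals $1$, so that $B_A = \min_{i,j} b_{ij} = m$, the smallest entry of $A^{-1}$.

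Finally I would substitute $\Sigma = n$ and $B_A = m$ directly into the bound \eqref{eq:main} of Theorem~\ref{thm:main}; this converts it verbatim into \eqref{eq:q}, and the monotonicity of $A + E$ follows at once. I do not expect any real obstacle: the substantive argument lives entirely in Theorem~\ref{thm:main}, and the only point needing a moment's care is the preliminary verification of strict diagonal dominance, which is what licenses invoking that theorem in the first place.
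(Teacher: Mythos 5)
Your proposal is correct and follows essentially the same route as the paper: specialize Theorem~\ref{thm:main} by noting that $A\mathbf{1}=\mathbf{1}$ and $\mathbf{1}^{T}A=\mathbf{1}^{T}$ give $\mathbf{r}=\mathbf{c}=\mathbf{1}$, $\Sigma=n$, and $B_{A}=m$. Your explicit check that a quasi-doubly-stochastic $M$-matrix is strictly diagonally dominant is a welcome addition that the paper leaves implicit.
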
 

\subsection{An application to graph Laplacians}
We now turn to another example, which in fact had been the motivating one for this note. It came up in joint work with Steve Kirkland on the subject of eigenvectors of signless Laplacians of graphs. 

Let $A \in \mathbb{R}^{n \times n}$ be the following block matrix whose diagonal blocks are of sizes $s \times s$ and $t \times t$:
$$
A=\left[
        \begin{array}{cc}
           (t+d)I& -J_{s,t}\\
           -J_{t,s}& (s+d)I
        \end{array}
    \right], 
$$
where $s \leq t$ and $d \leq s,t$. We want to find out how big can be a monotonicity-preserving perturbation $E$ without restricting its zero pattern. 

The inverse $A^{-1}$ can be computed using the standard block-matrix inversion formula (cf. \cite[p. 18]{HornJohnson}):
$$
A^{-1}=\left[
        \begin{array}{cc}
           \frac{1}{t+d}I+\frac{t}{d(t+d)(s+t+d)}J& \frac{1}{d(s+t+d)}J\\
           \frac{1}{d(s+t+d)}J& \frac{1}{s+d}I+\frac{s}{d(s+d)(s+t+d)}J
        \end{array}
    \right].
$$
The smallest entry of $A^{-1}$ is $m=\frac{s}{d(s+d)(t+s+d)}$. %(recall that $s \leq t$). 
Since the row and column sums of $A$ are all equal to $d$, we see that $\mathbf{r}=\mathbf{c}=d^{-1}\mathbf{1}$ and $\Sigma=nd^{-1}$. Therefore $B_{A}=\frac{ds}{(s+d)(t+s+d)}$ and \eqref{eq:main} gives us (after some algebraic simplifications) the bound
\begin{equation}\label{eq:st_mine}
|E| \leq \frac{s}{d+2s+t}.
\end{equation}

Let us now compute the bound given by Theorem \ref{thm:bouchon}. We have in this case $m(A)=s+d, \eta(A)=t+d$ and $M=2$. Therefore, Bouchon's bound is:
\begin{equation}\label{eq:st_b}
||E||_{\infty} < \frac{s+d}{2e(t+d)^{2}}.
\end{equation}

It is easy to see that in this situation \eqref{eq:st_mine} is much better than \eqref{eq:st_b} - every matrix that satisfies the latter must satisfy the former, but not vice versa. To take a numerical example, for $s=10,t=20,d=5$ we get $|E| \leq 0.2222$ and $||E||_{\infty} < 0.0044$ in the two bounds - a very significant difference.

\subsection{Discussion}
Theorem \ref{thm:main} gives the optimal bound for uniform componentwise perturbation in closed form, without the need to perform Buffoni iterations. However, it requires the knowledge of $A^{-1}$ which might not be possible in some situations. Therefore, we would like to have a way of estimating $B_{A}$ from below in by expressions that depend only on $A$. A possible way to obtain such estimates would be by using the special properties of \emph{inverse $M$-matrices} (cf. \cite{JohSmi11,LiLiuYangLi11}). 
%- an avenue that will be studied in subsequent work.

\section{Acknowledgements}
I would like to thank Professor Abraham Berman and Miriam Farber for a careful and sympathetic reading of the paper. Miss Farber has also kindly suggested Remark \ref{rem:det}. I would like to thank the organizers of the 4th International Symposium on Positive Systems (POSTA 2012) that took place in the Hamilton Institute of the National University of Ireland, Maynooth for giving me the opportunity to present an early version of this paper. I would also like to thank A.V. Gavrilov for bringing his paper \cite{Gav01} to my attention.

%%\section{notes}
%%mention the lower bounds of Nabben.
%%mention the lower bound on $m$ from $2x2$ determinants.
%%mention the Alfa et al. paper.
%%try to use the Gauss-Radau-type bound by Benzi \& Golub (and the papers they cite).
%%Simons-Yao
%%mention also Gil's work!!
%%Try a Cauchy-Schwarz trick. + Desnanot-Jacobi.
%%which entries can be modified to be + and which cannot?
%%Add Miriam's determinants.
%%Try again the Marshall-Olkin approach, with variable bounds.

\section{Acknowledgements}
%As I have mentioned already, this note owes its birth to joint work on signless graph Laplacians with Steve Kirkland; I am immensely grateful to him for the interesting and enjoyable mathematical paths he led me 

%\section{Comparison to Bouchon}

\bibliographystyle{abbrv}
\bibliography{nuim}

\end{document}